\documentclass[12pt]{article}
\textheight 22.5truecm \textwidth 14.5truecm
\setlength{\oddsidemargin}{0.35in}\setlength{\evensidemargin}{0.35in}
\setlength{\topmargin}{-.5cm}
\usepackage[utf8]{inputenc}\usepackage[english]{babel}
\usepackage[T1]{fontenc}\usepackage{amsmath}
\usepackage{amssymb}\usepackage{amsthm}
\usepackage{amsfonts}\usepackage{mathrsfs}
\usepackage{geometry}\usepackage{enumerate}

\DeclareMathOperator{\convw}{\xrightarrow[]{w}}

\newtheorem{theorem}{Theorem}
\newtheorem{definition}{Definition}
\newtheorem{example}{Example}
\newtheorem{proposition}{Proposition}
\newtheorem{lemma}{Lemma}
\makeatletter\renewcommand{\subsection}{\@startsection{subsection}{1}
{0pt}{3.25ex plus 1ex minus.2ex}{-1em}{\normalfont\normalsize\bf}}\makeatother\linespread{1.3}

\begin{document}

\title{On collectively almost (limitedly, order) L-weakly compact sets of operators}
\author{Safak Alpay, Eduard Emelyanov, Svetlana Gorokhova}
\maketitle

\begin{abstract}
We prove collective versions of semi-duality theorems for sets of
almost (limitedly, and order) L-weakly compact operators. 
\end{abstract}

{\bf Keywords:}  L-weakly compact set, Banach lattice, domination problem\\

{\bf MSC 2020:} 46B50, 47B01, 47B07, 47B65

\section{Introduction and Preliminaries}

\hspace{5mm}
The theory of L-weakly compact operators goes back to P. Meyer-Nieberg.
The most powerful result of the Meyer-Nieberg theory is the duality 
theorem \cite[Satz~3]{Mey1974}. This theory was contributed recently in several papers
(see, for example \cite{AEG2024TJM,BLM2018,CCJ2014,LBM2021,OM2022}).
The aim of the present paper is to prove collective versions of the semi-duality 
theorems \cite[Thm.\,2.5]{BLM2018}, \cite[Thm.\,4]{AEG2024TJM}, and \cite[Thm.\,2.3]{LBM2021}
for almost (limitedly, order) L-weakly compact operators.
Note that the considerably easier duality theorem for collectively L-weakly compact sets
of operators was proved in \cite[Thm.\,2]{E2024}. We also investigate 
the domination problem for collectively almost (limitedly, and order) 
L-weakly compact sets of operators.

Throughout the paper, vector spaces are real; operators are linear and bounded; 
letters $X$ and $Y$ stand for Banach spaces; $E$ and $F$ for Banach lattices; 
$B_X$ denotes the closed unit ball of $X$; $I_X$ the identity operator on $X$;
$E^a$ the order continuous part of $E$;
$\text{\rm L}(X,Y)$ (resp., $\text{\rm K}(X,Y)$, $\text{\rm W}(X,Y)$) 
the space of bounded (resp., compact, weakly compact) operators from $X$ to $Y$. 
A subset $A$ of $F$ is L-weakly compact \cite[pp.\,146--147]{Mey1974}
(briefly, $A$ is an \text{\rm Lwc} set) if each disjoint 
sequence in the solid hull $\text{\rm sol}(A)=\bigcup\limits_{a\in A}[-|a|,|a|]$
of $A$ is norm-null, and an operator $T\in\text{\rm L}(X,F)$ is 
L-weakly compact (briefly, $T\in\text{\rm Lwc}(X,F)$) 
if $T(B_X)$ is an \text{\rm Lwc} set.
We shall use the following lemma (see \cite[Lm.\,1]{AEG2024TJM}).

\begin{lemma}\label{just lemma}
Let $A$ be a nonempty bounded subset of $F'$. The following are equivalent.
\begin{enumerate}[$i)$]
\item $A$ is an \text{\rm Lwc} subset of $F'$.
\item Each disjoint sequence in $B_F$  is uniformly null on $A$.
\item Each disjoint sequence in $B_{F''}$ is uniformly null on $A$.
\end{enumerate}
\end{lemma}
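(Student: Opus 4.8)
The implication $(iii)\Rightarrow(ii)$ is immediate: the canonical embedding $F\hookrightarrow F''$ is a lattice isometry, so every disjoint sequence in $B_F$ is a disjoint sequence in $B_{F''}$ and hence, by $(iii)$, uniformly null on $A$. For $(i)\Leftrightarrow(iii)$ I would quote Meyer-Nieberg's dual description of L-weakly compact sets (see \cite{Mey1974}) applied to the Banach lattice $F'$, whose dual is $F''$: a norm-bounded $A\subseteq F'$ is an \text{\rm Lwc} set if and only if every disjoint sequence in $B_{F''}$ is uniformly null on $A$. Combined with $(iii)\Rightarrow(ii)$ this already yields $(i)\Rightarrow(ii)$, so the whole lemma reduces to proving $(ii)\Rightarrow(i)$.

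I would prove $(ii)\Rightarrow(i)$ by contraposition. Assume $A$ is not an \text{\rm Lwc} set and fix a disjoint sequence $(g_n)$ in $\operatorname{sol}(A)$ with $\inf_n\|g_n\|>0$. Replacing $g_n$ by $|g_n|$ (still disjoint, still in $\operatorname{sol}(A)$, with the same norm) we may assume $0\le g_n\le|a_n|$ with $a_n\in A$, and $\sup_n\|g_n\|<\infty$ since $A$ is bounded. Since $F'$ is Dedekind complete, the band projections $P_n$ of $F'$ onto the bands generated by the $g_n$ are pairwise disjoint, and their adjoints $P_n'$ are pairwise disjoint band projections of $F''$ with $\langle P_n'x,g_m\rangle=\langle x,P_ng_m\rangle=\delta_{nm}\langle x,g_n\rangle$. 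Using $\|g_n\|=\sup\{\langle x,g_n\rangle:x\in B_F,\ x\ge0\}$, pick $x_n\in B_F^{+}$ with $\langle x_n,g_n\rangle\ge\delta>0$; then $x_n'':=P_n'x_n\in B_{F''}^{+}$ are pairwise disjoint with $\langle x_n'',g_n\rangle\ge\delta$. The task now is to \emph{transfer this disjoint detecting sequence from $B_{F''}$ down to $B_F$}: using the weak$^{\ast}$ density of $B_F^{+}$ in $B_{F''}^{+}$ (Goldstine) together with the $P_n$, I would run a diagonal gliding-hump argument producing a strictly increasing sequence $(n_k)$ and pairwise disjoint $y_k\in B_F^{+}$ with $\langle y_k,g_{n_k}\rangle\ge\delta/2$.

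Granting the $(y_k)$, the rest is routine Riesz--Kantorovich bookkeeping: $0\le g_{n_k}\le|a_{n_k}|$ and $y_k\ge0$ give $\langle y_k,|a_{n_k}|\rangle\ge\delta/2$, so the formula $\langle y_k,|a_{n_k}|\rangle=\sup\{|\langle z,a_{n_k}\rangle|:z\in F,\ |z|\le y_k\}$ furnishes $z_k\in F$ with $|z_k|\le y_k$ (hence $(z_k)$ disjoint and in $B_F$) and $|\langle z_k,a_{n_k}\rangle|\ge\delta/4$. Then $\sup_{a\in A}|\langle z_k,a\rangle|\ge\delta/4$ for all $k$, contradicting $(ii)$.

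The one genuinely non-routine point is the gliding-hump step, i.e.\ passing from the disjoint sequence $(x_n'')$ in $B_{F''}$ to a disjoint sequence in $B_F$ still pairing nontrivially with the $g_n$: Goldstine gives only weak$^{\ast}$ approximation, which does not by itself preserve disjointness, and the naive remedy at stage $k$ --- subtract from a Goldstine approximant of a sufficiently late $x_n''$ its overlap with $y_1\vee\dots\vee y_{k-1}$ --- requires the tail of $(g_n)$ to be asymptotically negligible on the fixed order interval $[0,\,y_1\vee\dots\vee y_{k-1}]$ of $F$, a property that \emph{fails} for general bounded disjoint sequences in a dual Banach lattice (e.g.\ for $(\delta_{q_n})$ in $M[0,1]=C[0,1]'$ tested against the constant function $1$). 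Overcoming this means first refining $(g_n)$ to a sparse subsequence, keeping the finite suprema $y_1\vee\dots\vee y_{k-1}$ ``thin'' relative to the carriers of the chosen $g_{n_k}$, and organizing the induction so that the errors introduced at each stage are summably small; carrying out this selection using only the disjointness and norm-boundedness of $(g_n)$ together with the Dedekind completeness of $F'$ is where the real work lies.
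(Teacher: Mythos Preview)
The paper does not supply its own proof of this lemma; it merely cites \cite[Lm.\,1]{AEG2024TJM}. So there is no in-paper argument to match against. Judged on its own, however, your proposal has a genuine gap and a significant oversight.

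You correctly obtain $(i)\Leftrightarrow(iii)$ from the Burkinshaw--Dodds/Meyer-Nieberg duality (a bounded $A\subseteq F'$ is \text{\rm Lwc} iff disjoint sequences in $B_{(F')'}=B_{F''}$ are uniformly null on $A$), and $(iii)\Rightarrow(ii)$ is trivial. But for $(ii)\Rightarrow(i)$ you launch a gliding-hump construction, identify the crucial transfer step from $B_{F''}$ down to $B_F$ as ``the one genuinely non-routine point'', give a counterexample showing the naive version fails, and then stop --- saying only that organizing the induction ``is where the real work lies''. As written this is a sketch of a strategy, not a proof; the step you leave open is exactly the heart of the matter in that approach.

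The oversight is that the very theorem you invoke for $(i)\Leftrightarrow(iii)$ already yields $(i)\Leftrightarrow(ii)$ directly --- one simply applies it in $F$ rather than in $F'$. The Burkinshaw--Dodds theorem (\cite[Thm.\,5.63]{AB2006}, used repeatedly throughout the present paper) says: for nonempty bounded $C\subseteq E$ and $D\subseteq E'$, every disjoint sequence in $\operatorname{sol}(C)$ is uniformly null on $D$ if and only if every disjoint sequence in $\operatorname{sol}(D)$ is uniformly null on $C$. Take $E=F$, $C=B_F$ (which is already solid), and $D=A$. Then one side is exactly condition $(ii)$, and the other side reads ``every disjoint sequence $(g_n)$ in $\operatorname{sol}(A)$ satisfies $\sup_{x\in B_F}|g_n(x)|=\|g_n\|\to 0$'', which is precisely condition $(i)$. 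No gliding hump, no descent from $F''$ to $F$, is needed.
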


The uniform convergence of a sequence $(g_n)$ of functions  
to a function $g$ on a set $A$ is denoted by $g_n\rightrightarrows g(A)$.
A subset $A$ of $Y$ is limited whenever $f_n\rightrightarrows 0(A)$ 
for every \text{\rm w}$^\ast$-null sequence $(f_n)$ in $Y'$ \cite{BD1984}. 

Following P. M. Anselone and T. W. Palmer \cite{AP1968}, 
a subset ${\cal T}$ of $\text{\rm L}(X,Y)$ is called 
collectively (weakly) compact (collectively \text{\rm Lwc}) 
if the set ${\cal T}B_X=\bigcup\limits_{T\in{\cal T}}T(B_X)$ 
is relatively (weakly) compact (resp., \text{\rm Lwc}). 
Bold letters are used to denote classes of sets of operators
possessing a certain property collectively.
For example, $\text{\bf P}(X,Y)$ denotes collectively $P$-operators
from $X$ to $Y$. In all cases considered in the present paper,
${\cal T}\in\text{\bf P}(X,Y)$ implies $T\in\text{\rm P}(X,Y)$ for every $T\in\cal{T}$. 

\medskip
For unexplained notation and terminology, we refer to 
\cite{AB2006,A1971,Mey1991}.

\section{Collectively qualified L-weakly compact sets of operators}

\hspace{5mm}
We begin with the following definition of collectively qualified L-weakly compact
sets of operators. 

\begin{definition}\label{col Lwc-sets of operators}
{\em
A bounded subset ${\cal T}$ of $\text{\rm L}(X,F)$ is:
\begin{enumerate}[a)]
\item 
collectively L-weakly compact (briefly, ${\cal T}\in\text{\bf Lwc}(X,F)$)
if the set ${\cal T}A=\bigcup\limits_{T\in{\cal T}}T(A)$  
is \text{\rm Lwc} for each bounded $A\subseteq X$.
\item 
collectively almost L-weakly compact (briefly, \text{\bf a-Lwc}(X,F))
if ${\cal T}A$ is \text{\rm Lwc} for each relatively weakly compact $A\subseteq X$.
\item 
collectively limitedly L-weakly compact (briefly, ${\cal T}\in\text{\bf l-Lwc}(X,F)$)
if ${\cal T}A$ is \text{\rm Lwc} for each limited $A\subseteq X$.
\end{enumerate}
A bounded subset ${\cal T}$ of $\text{\rm L}(E,F)$ is:
\begin{enumerate}[d)]
\item 
collectively order L-weakly compact (briefly, ${\cal T}\in\text{\bf o-Lwc}(E,F)$)
if the set ${\cal T}A$ is \text{\rm Lwc} for each order bounded $A\subseteq E$.
\end{enumerate}
}
\end{definition}

\noindent
The next two elementary propositions concern the topological
properties of the sets of operators introduced in Definition \ref{col Lwc-sets of operators}.
The strong closure $\overline{{\cal T}}^s$ of ${\cal T}\in\text{\bf Lwc}(X,F)$ 
also satisfies $\overline{{\cal T}}^s\in\text{\bf Lwc}(X,F)$ \cite[Prop.\,1]{E2024}.

\begin{proposition}\label{closure-LW-sets}
The following holds.
\begin{enumerate}[$i)$]
\item 
${\cal T}\in\text{\bf a-Lwc}(X,F)\Longrightarrow\overline{{\cal T}}^s\in\text{\bf a-Lwc}(X,F)$.
\item 
${\cal T}\in\text{\bf l-Lwc}(X,F)\Longrightarrow\overline{{\cal T}}^s\in\text{\bf l-Lwc}(X,F)$.
\item 
${\cal T}\in\text{\bf o-Lwc}(E,F)\Longrightarrow\overline{{\cal T}}^s\in\text{\bf o-Lwc}(E,F)$.
\end{enumerate}
\end{proposition}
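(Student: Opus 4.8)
The plan is to treat the three cases simultaneously, reducing each to two ingredients. The first is a boundedness remark: if $\sup_{T\in{\cal T}}\|T\|=M$ and $S\in\overline{{\cal T}}^s$, then for every $x\in B_X$ and $\varepsilon>0$ there is $T\in{\cal T}$ with $\|Sx-Tx\|<\varepsilon$, whence $\|Sx\|\le M$ and so $\|S\|\le M$; thus $\overline{{\cal T}}^s$ is again a bounded set of operators, as required in Definition \ref{col Lwc-sets of operators}. The second is the transfer inclusion, valid for \emph{every} subset $A\subseteq X$ (resp. $A\subseteq E$):
\[
\overline{{\cal T}}^s A\ \subseteq\ \overline{{\cal T}A}^{\,\|\cdot\|},
\]
the norm closure of ${\cal T}A$. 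Indeed, for $S\in\overline{{\cal T}}^s$ and $a\in A$, a net $(T_\alpha)$ in ${\cal T}$ converging to $S$ strongly satisfies $\|T_\alpha a-Sa\|\to 0$ with $T_\alpha a\in{\cal T}A$, so $Sa$ lies in the norm closure of ${\cal T}A$; equivalently, this is immediate from the definition of the strong closure by evaluating at the single point $a$.

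Now I would fix a relatively weakly compact $A\subseteq X$ for $i)$, a limited $A\subseteq X$ for $ii)$, and an order bounded $A\subseteq E$ for $iii)$; in each case the hypothesis on ${\cal T}$ gives that ${\cal T}A$ is \text{\rm Lwc}, and it remains to see that its norm closure stays \text{\rm Lwc} and that $\overline{{\cal T}}^s A$, being a subset of it, is \text{\rm Lwc} as well (the latter being trivial from the disjoint-sequence definition, since $\text{\rm sol}$ is monotone with respect to inclusion). For the former: if $(y_n)$ is a disjoint sequence in $\text{\rm sol}(\overline{{\cal T}A}^{\,\|\cdot\|})$, pick $c_n\in\overline{{\cal T}A}^{\,\|\cdot\|}$ with $|y_n|\le|c_n|$ and $b_n\in{\cal T}A$ with $\|c_n-b_n\|\le 2^{-n}$; then $(|y_n|-|b_n|)^+\le|c_n-b_n|$, the sequence $(|y_n|\wedge|b_n|)_n$ is disjoint and lies in $\text{\rm sol}({\cal T}A)$, hence norm-null since ${\cal T}A$ is \text{\rm Lwc}, so that $\|y_n\|\le\||y_n|\wedge|b_n|\|+2^{-n}\to 0$. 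Since $A$ ranged over all sets of the relevant type, this yields $\overline{{\cal T}}^s\in\text{\bf a-Lwc}(X,F)$, $\overline{{\cal T}}^s\in\text{\bf l-Lwc}(X,F)$, and $\overline{{\cal T}}^s\in\text{\bf o-Lwc}(E,F)$, respectively.

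There is no genuine obstacle here: the one step calling for care is the transfer inclusion, where one uses that strong-operator convergence becomes \emph{norm} convergence after evaluating at a fixed vector, so that the structure of $A$ plays no role in the inclusion itself and enters only through the hypothesis that ${\cal T}A$ is \text{\rm Lwc}. This is precisely the scheme of \cite[Prop.\,1]{E2024}, with ``bounded set'' replaced by ``relatively weakly compact / limited / order bounded set''.
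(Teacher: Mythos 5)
Your proof is correct and follows essentially the same route as the paper: the key inclusion $\overline{{\cal T}}^s A\subseteq\overline{{\cal T}A}$ combined with the fact that the norm closure of an \text{\rm Lwc} set is again \text{\rm Lwc} (which the paper simply cites from \cite[Lm.\,3]{E2024}, while you prove it directly via the disjoint-sequence splitting $|y_n|=|y_n|\wedge|b_n|+(|y_n|-|b_n|)^+$). Nothing further is needed.
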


\begin{proof} 
We give a proof of $i)$ only. Other two cases are similar. 
Let ${\cal T}\in\text{\bf a-Lwc}(X,F)$ and $A$ be a weakly compact subset of $X$.
Since ${\cal T}A$ is an \text{\rm Lwc} subset of $F$ then
the norm closure $\overline{{\cal T}A}$ of ${\cal T}A$ is likewise \text{\rm Lwc}
(see, for example, \cite[Lm.\,3]{E2024}). The rest of the proof follows from
$\overline{{\cal T}}^sA\subseteq\overline{{\cal T}A}$.
\end{proof} 

\noindent
Similarly to \cite[Thm.\,1]{E2024}, we have the following result.

\begin{proposition}\label{totally bounded}
{\em
Every totally bounded set of almost (limitedly, order) L-weakly compact operators
is collectively almost (limitedly, order) L-weakly compact.}
\end{proposition}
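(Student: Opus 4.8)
The plan is to treat the three cases (almost, limitedly, and order L-weak compactness) by one and the same argument, since in each of them the class of admissible test sets $A$ (relatively weakly compact, limited, order bounded, respectively) consists of norm bounded sets, and the hypothesis on ${\cal T}$ guarantees that $T(A)$ is an Lwc subset of $F$ for every individual $T\in{\cal T}$. So fix an admissible $A$, say $A\subseteq rB_X$ with $r\ge1$, and suppose towards a contradiction that ${\cal T}A$ is not Lwc. Then there exist a disjoint sequence $(y_k)$ in $\text{\rm sol}({\cal T}A)$ and $\delta>0$ with $\|y_k\|\ge\delta$ for all $k$ (after passing to a subsequence). By definition of the solid hull, $|y_k|\le|T_kx_k|$ for some $T_k\in{\cal T}$ and $x_k\in A$.

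Next I would bring in total boundedness of ${\cal T}$: pick a finite $\tfrac{\delta}{2r}$-net $S_1,\dots,S_n\in{\cal T}$ for ${\cal T}$ in operator norm. By the pigeonhole principle, after a further passage to a subsequence there is one index $i$ with $\|T_k-S_i\|<\tfrac{\delta}{2r}$ for all $k$; set $R_k:=T_k-S_i$, so that $\|R_kx_k\|<\tfrac{\delta}{2}$. Using the lattice identity $a=a\wedge b+(a-b)^+$ for $a,b\ge0$, decompose $|y_k|=\bigl(|y_k|\wedge|R_kx_k|\bigr)+p_k$, where $p_k:=\bigl(|y_k|-|R_kx_k|\bigr)^+\ge0$.

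The point of this particular splitting is that $p_k$ simultaneously keeps disjointness and lands in $\text{\rm sol}(S_i(A))$. Indeed $0\le p_k\le|y_k|$, so $(p_k)$ is pairwise disjoint because $(y_k)$ is; and from $|y_k|\le|S_ix_k|+|R_kx_k|$ we get $|y_k|-|R_kx_k|\le|S_ix_k|$, hence $p_k\le|S_ix_k|$, i.e.\ $p_k\in\text{\rm sol}(S_i(A))$. Moreover $\|p_k\|\ge\|y_k\|-\bigl\||y_k|\wedge|R_kx_k|\bigr\|\ge\delta-\|R_kx_k\|>\tfrac{\delta}{2}$. Thus $(p_k)$ is a disjoint sequence in $\text{\rm sol}(S_i(A))$ bounded away from zero, contradicting the fact that $S_i(A)$ is Lwc --- which holds because $S_i\in{\cal T}$ is almost (resp.\ limitedly, resp.\ order) L-weakly compact and $A$ is relatively weakly compact (resp.\ limited, resp.\ order bounded). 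Hence ${\cal T}A$ is Lwc, and since $A$ was an arbitrary admissible set, ${\cal T}$ is collectively almost (limitedly, order) L-weakly compact.

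I expect the only genuinely delicate step to be the passage from the failure of Lwc-ness for ${\cal T}A$ to its failure for a single $S_i(A)$: a naive Riesz decomposition $y_k=u_k+v_k$ with $|u_k|\le|S_ix_k|$ and $|v_k|\le|R_kx_k|$ need not produce a disjoint sequence $(u_k)$, which is why the identity $|y_k|=|y_k|\wedge|R_kx_k|+(|y_k|-|R_kx_k|)^+$ is used instead --- truncating a term of a disjoint sequence from below by a positive element preserves disjointness. The remaining ingredients (norm boundedness of all the admissible test sets, the pigeonhole argument on the finite net, and the triangle inequality for the estimate on $\|p_k\|$) are routine, exactly as in the proof of \cite[Thm.\,1]{E2024}.
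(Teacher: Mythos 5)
Your proof is correct, but it follows a genuinely different route from the paper's. The paper argues positively: for each $k$ it covers ${\cal T}$ by a finite $\frac{1}{2k}$-net ${\cal T}_k$, invokes the fact that a finite union of \text{\rm Lwc} sets is \text{\rm Lwc} \cite[Cor.\,3.6.4]{Mey1991} to get ${\cal T}_kA\subseteq[-x_k,x_k]+\frac{1}{2k}B_F$ with $x_k\in F^a_+$ via the order-interval characterization \cite[Prop.\,3.6.2]{Mey1991}, deduces ${\cal T}A\subseteq[-x_k,x_k]+\frac{1}{k}B_F$, and applies that characterization once more. You instead work by contradiction directly from the disjoint-sequence definition: a non-null disjoint sequence $(y_k)$ in $\text{\rm sol}({\cal T}A)$ is pushed, via pigeonhole on a single finite net and the lattice identity $|y_k|=|y_k|\wedge|R_kx_k|+(|y_k|-|R_kx_k|)^+$, into a non-null disjoint sequence $(p_k)$ in $\text{\rm sol}(S_i(A))$ for one fixed $S_i$ in the net; all the individual steps (the bound $p_k\le|S_ix_k|$, disjointness of $(p_k)$ from $0\le p_k\le|y_k|$, and the norm estimate $\|p_k\|>\delta/2$) check out. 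What each approach buys: the paper's argument is shorter because it outsources the work to the Meyer-Nieberg machinery (order-interval approximation and stability of \text{\rm Lwc} sets under finite unions), while yours is self-contained at the level of the definition and makes visible exactly where total boundedness enters (a single pigeonhole), at the cost of the more delicate truncation argument you correctly identify as the crux --- a naive Riesz decomposition would indeed destroy disjointness. One small caveat: your closing remark that the routine ingredients are ``exactly as in the proof of \cite[Thm.\,1]{E2024}'' is likely inaccurate, since that proof (like the one in this paper, which is explicitly modeled on it) appears to use the order-interval route rather than yours; this affects only the attribution, not the validity.
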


\begin{proof}
As all three cases are analogues, we include only the proof for \text{\rm a-Lwc} operators.
Let ${\cal T}$ be totally bounded subset of $\text{\rm a-Lwc}(X,F)$ and let $A$ be a 
weakly compact subset of $X$.  For each $k\in\mathbb{N}$ pick a finite subset 
${\cal T}_k$ of ${\cal T}$ such that, for every $T\in{\cal T}$ there exists $S\in{\cal T}_k$ 
with $\|T-S\|\le\frac{1}{2k}$. For each $k\in\mathbb{N}$ the set ${\cal T}_kA$ is \text{\rm Lwc}
by \cite[Cor.\,3.6.4]{Mey1991}. So, ${\cal T}_kA\subseteq[-x_k,x_k]+\frac{1}{2k}B_F$
for some $x_k\in F^a_+$ by \cite[Prop.\,3.6.2]{Mey1991}, and hence
${\cal T}A\subseteq[-x_k,x_k]+\frac{1}{k}B_F$. One more application 
of \cite[Prop.\,3.6.2]{Mey1991} gives that ${\cal T}A$ is an \text{\rm Lwc} set.
It follows ${\cal T}\in\text{\bf a-Lwc}(X,F)$.
\end{proof} 

\subsection{}
The next result strengthens \cite[Thm.\,2]{AEG2024TJM}. Indeed, 
\cite[Thm.\,2]{AEG2024TJM} follows from Theorem \ref{2-LW-sets} by taking ${\cal T}=\{T\}$ and
applying the standard fact that each \text{\rm Lwc} subset of $F$ lies in $F^a$.

\begin{theorem}\label{2-LW-sets}
Let ${\cal T}$ be a bounded subset of $\text{\rm L}(X,F)$. The following are equivalent.
\begin{enumerate}[$i)$]
\item 
${\cal T}\in\text{\bf l-Lwc}(X,F)$.
\item 
The set ${\cal T}x=\{Tx: T\in{\cal T}\}$  
is \text{\rm Lwc} for each $x\in X$.
\item 
$\sup\limits_{T\in{\cal T}}|f_n(Tx)|\to 0$ for every $x\in X$ and 
every disjoint $(f_n)$ in $B_{F'}$.
\end{enumerate}
\end{theorem}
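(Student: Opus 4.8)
\emph{Proof plan.} I would establish the three implications cyclically, $(i)\Rightarrow(ii)\Rightarrow(iii)\Rightarrow(i)$, using throughout the analogue of Lemma~\ref{just lemma} for subsets of $F$ rather than of $F'$: a norm-bounded $B\subseteq F$ is an \text{\rm Lwc} set if and only if $\sup_{b\in B}|f_n(b)|\to 0$ for every disjoint sequence $(f_n)$ in $B_{F'}$. (For the forward direction one writes $B\subseteq[-u,u]+\varepsilon B_F$ with $u\in F^a_+$ by \cite[Prop.\,3.6.2]{Mey1991} and uses that disjoint bounded functionals are uniformly null on the order interval $[-u,u]$ when $u\in F^a_+$; the converse is the standard disjointification argument.) Since ${\cal T}$ is a bounded set of operators and limited sets are bounded, each of the sets ${\cal T}x$ and ${\cal T}A$ appearing below is norm-bounded, so this criterion applies to it.

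$(i)\Rightarrow(ii)$: every singleton $\{x\}\subseteq X$ is limited, since $f_n(x)\to 0$ for any $\mathrm{w}^{\ast}$-null $(f_n)\subseteq X'$; hence ${\cal T}x={\cal T}\{x\}$ is \text{\rm Lwc} directly from the definition of $\text{\bf l-Lwc}(X,F)$. $(ii)\Rightarrow(iii)$: fixing $x\in X$ and applying the criterion to $B={\cal T}x$, the \text{\rm Lwc}-ness of ${\cal T}x$ is exactly the statement that $\sup_{T\in{\cal T}}|f_n(Tx)|\to 0$ for every disjoint $(f_n)$ in $B_{F'}$.

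The substantive implication is $(iii)\Rightarrow(i)$. Let $A\subseteq X$ be limited; by the criterion it suffices to fix a disjoint sequence $(f_n)$ in $B_{F'}$ and show $\sup\{|f_n(Ta)|:T\in{\cal T},\ a\in A\}\to 0$. If this fails, then after passing to a subsequence there are $\varepsilon>0$, operators $T_n\in{\cal T}$ and points $a_n\in A$ with $|f_n(T_na_n)|>\varepsilon$ for all $n$. Set $g_n:=T_n^{\ast}f_n\in X'$. Then $\|g_n\|\le\sup_{T\in{\cal T}}\|T\|<\infty$, and for every $x\in X$ we have $|g_n(x)|=|f_n(T_nx)|\le\sup_{T\in{\cal T}}|f_n(Tx)|\to 0$ by $(iii)$; thus $(g_n)$ is $\mathrm{w}^{\ast}$-null in $X'$. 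Since $A$ is limited, $g_n\rightrightarrows 0(A)$, i.e.\ $\sup_{a\in A}|g_n(a)|\to 0$, contradicting $|g_n(a_n)|=|f_n(T_na_n)|>\varepsilon$. Hence ${\cal T}\in\text{\bf l-Lwc}(X,F)$, closing the cycle.

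The one genuinely new idea is the passage to $g_n=T_n^{\ast}f_n$ in $(iii)\Rightarrow(i)$: boundedness of ${\cal T}$ makes $(g_n)$ norm-bounded, while the pointwise hypothesis $(iii)$ forces it to be $\mathrm{w}^{\ast}$-null, which is precisely what allows the definition of a limited set to be invoked. I expect the only real friction to be bookkeeping — recording that ${\cal T}x$ and ${\cal T}A$ are norm-bounded, and citing the disjoint-functional form of the \text{\rm Lwc} criterion in the exact shape used here, since both Definition~\ref{col Lwc-sets of operators} and the Meyer-Nieberg definition are phrased via disjoint sequences in solid hulls.
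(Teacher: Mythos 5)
Your proposal is correct and follows essentially the same route as the paper: $(i)\Rightarrow(ii)$ via singletons being limited, $(ii)\Rightarrow(iii)$ via the Burkinshaw--Dodds characterization of \text{\rm Lwc} sets by disjoint functionals (\cite[Thm.\,5.63]{AB2006}, which is exactly the criterion you re-derive), and $(iii)\Rightarrow(i)$ by contradiction through the adjoint sequence $g_n=T_n'f_n$, which $(iii)$ forces to be $\mathrm{w}^{\ast}$-null while limitedness of $A$ forbids $|g_n(a_n)|\ge\varepsilon$. No gaps.
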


\begin{proof} 
$i)\ \Longrightarrow\ ii)$\
It is trivial.

\medskip
$ii)\ \Longrightarrow\ iii)$\
Let $x\in X$ and let $(f_n)$ be a disjoint sequence in $B_{F'}$.
By the assumption, every disjoint sequence in the bounded set
$\text{\rm sol}({\cal T}x)$ is norm-null, and hence is uniformly
null on $B_{F'}$. By the Burkinshaw--Dodds theorem (see, for example, \cite[Thm.\,5.63]{AB2006}),
$f_n\rightrightarrows 0({\cal T}x)$. Therefore, 
$\sup\limits_{T\in{\cal T}}|f_n(Tx)|\to 0$.

\medskip
$iii)\ \Longrightarrow\ i)$\
Suppose in contrary ${\cal T}A$ is not \text{\rm Lwc} for some limited $A\subseteq X$.
By using \cite[Thm.\,5.63]{AB2006}, take a disjoint sequence $(f_n)$ in $B_{F'}$
such that $f_n\not\rightrightarrows 0({\cal T}x)$. Thus, for some $\varepsilon>0$
there exist  sequences $(x_n)$ in $A$ and $(T_n)$ in ${\cal T}$
satisfying $|(T'_nf_n)x_n|=|f_n(T_nx_n)|\ge\varepsilon$ for all $n$.
Since $A$ is limited, $(T'_nf_n)$ is not \text{\rm w}$^\ast$-null, and hence
$(T'_nf_n)x\not\to 0$ for some $x\in X$. A contradiction to the assumption $iii)$
completes the proof.
\end{proof}

\noindent
The first inclusions in (1) and (2) below are trivial and 
the second ones follow from Theorem \ref{2-LW-sets}.
$$
   \text{\bf Lwc}(X,F)\subseteq\text{\bf a-Lwc}(X,F)\subseteq\text{\bf l-Lwc}(X,F).
   \eqno(1)
$$
$$
   \text{\bf Lwc}(X,F)\subseteq\text{\bf o-Lwc}(X,F)\subseteq\text{\bf l-Lwc}(X,F).
    \eqno(2)
$$
All inclusions in (1) and (2) are proper. It can be seen by taking ${\cal T}=\{T\}$ and considering 
standard examples from \cite{AEG2024TJM,BLM2018,LBM2021,OM2022}.

By the following example (cf. \cite[Ex.\,2]{E2024}), it is quite often that even a bounded set of 
almost (limitedly, order) \text{\rm Lwc} operators need not to be 
collectively almost (limitedly, order) \text{\rm Lwc}.

\begin{example}\label{example}
{\em 
Let $\dim(F^a)=\infty$. Take a positive disjoint normalized sequence $(u_n)$ in $F^a$
and define rank one operators $T_n:\ell^\infty\to F$ by $T_nx=x_nu_n$. 
Then each $T_n$ is \text{\rm Lwc} since $T_n(\ell^\infty)\subseteq F^a$. 
Moreover, $\|T_n\|=1$ for all $n$.
However, ${\cal T}=\{T_n\}_{n=1}^{\infty}\notin\text{\bf l-Lwc}(\ell^\infty,F)$,
and hence ${\cal T}\notin\text{\bf a-Lwc}(\ell^\infty,F)$ and ${\cal T}\notin\text{\bf o-Lwc}(\ell^\infty,F)$.
Indeed, the set $A=\{e_n\}_{n=1}^{\infty}\subset\ell^\infty$ is
limited by Phillip’s lemma (cf. \cite[Thm.\,4.67]{AB2006}), yet
the set ${\cal T}A=\{0\}\cup\{T_ne_n\}_{n=1}^{\infty}=\{0\}\cup\{u_n\}_{n=1}^{\infty}$ 
is not \text{\rm Lwc}.}
\end{example}

\subsection{}
The following theorem is an extension of \cite[Thm.\,2.1]{BLM2018}.  
Theorem \ref{1-LW-sets} implies \cite[Thm.\,2.1]{BLM2018} by taking ${\cal T}=\{T\}$.

\begin{theorem}\label{1-LW-sets}
Let ${\cal T}$ be a bounded subset of $\text{\rm L}(X,F)$. The following are equivalent.
\begin{enumerate}[$i)$]
\item 
${\cal T}\in\text{\bf a-Lwc}(X,F)$.
\item 
The set $\{Tx_k: T\in{\cal T}, k\in\mathbb{N}\}$  
is \text{\rm Lwc} for every \text{\rm w}-convergent $(x_n)$ in $X$.
\item 
$\sup\limits_{T\in{\cal T}}|f_n(Tx_n)|\to 0$
for every disjoint $(f_n)$ in $B_{F'}$ and every \text{\rm w}-convergent $(x_n)$.
\end{enumerate}
\end{theorem}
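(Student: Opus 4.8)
The plan is to prove the cycle $i) \Rightarrow ii) \Rightarrow iii) \Rightarrow i)$, mirroring the structure of the proof of Theorem \ref{2-LW-sets} but with relatively weakly compact sets in place of single points. The implication $i) \Rightarrow ii)$ is immediate: if $(x_n)$ is weakly convergent then $A = \{x_n : n \in \mathbb{N}\}$ is relatively weakly compact, so $\mathcal{T}A = \{Tx_k : T \in \mathcal{T}, k \in \mathbb{N}\}$ is \text{\rm Lwc} by definition of \text{\bf a-Lwc}.

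For $ii) \Rightarrow iii)$, let $(x_n)$ be weakly convergent and $(f_n)$ disjoint in $B_{F'}$. By hypothesis the set $B := \{Tx_k : T \in \mathcal{T}, k \in \mathbb{N}\}$ is \text{\rm Lwc}, so every disjoint sequence in $\text{\rm sol}(B)$ is norm-null; by the Burkinshaw--Dodds theorem \cite[Thm.\,5.63]{AB2006} this forces $f_n \rightrightarrows 0(B)$, that is, $\sup_{T \in \mathcal{T}, k} |f_n(Tx_k)| \to 0$. A fortiori $\sup_{T \in \mathcal{T}} |f_n(Tx_n)| \to 0$, which is $iii)$.

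The substantive implication is $iii) \Rightarrow i)$. I would argue by contraposition: suppose $\mathcal{T}A$ fails to be \text{\rm Lwc} for some relatively weakly compact $A \subseteq X$. By \cite[Thm.\,5.63]{AB2006} there is a disjoint sequence $(f_n)$ in $B_{F'}$ and some $\varepsilon > 0$ together with sequences $(T_n)$ in $\mathcal{T}$ and $(a_n)$ in $A$ with $|f_n(T_n a_n)| = |(T_n' f_n)(a_n)| \ge \varepsilon$ for all $n$. Since $A$ is relatively weakly compact, by the Eberlein--\v{S}mulian theorem we may pass to a subsequence along which $(a_n)$ converges weakly to some $x \in X$; relabel so this holds for the whole sequence. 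The key point is then to replace the ``moving'' evaluation points $a_n$ by a genuinely weakly convergent sequence to which hypothesis $iii)$ applies. Writing $|(T_n' f_n)(a_n)| \le |(T_n' f_n)(a_n - x)| + |(T_n' f_n)(x)|$, and noting that the constant sequence $(x)$ and the weakly null sequence $(a_n - x)$ are both weakly convergent: applying $iii)$ to the weakly convergent sequence $(a_n - x)$ kills the first term, and applying $iii)$ to the constant sequence $(x)$ (or directly Theorem \ref{2-LW-sets}, since $\mathcal{T} \in \text{\bf a-Lwc}$ is false but we only need $iii)$ as stated) kills the second; hence $|(T_n' f_n)(a_n)| \to 0$, contradicting $|(T_n' f_n)(a_n)| \ge \varepsilon$.

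The main obstacle is the middle step of the last paragraph: hypothesis $iii)$ is stated only with the diagonal pair $(f_n, x_n)$, so one must be slightly careful that both $(x)$ and $(a_n - x)$ qualify as ``weakly convergent $(x_n)$'' in the sense of $iii)$ — they do, the constant sequence trivially and $(a_n - x)$ by weak convergence of $(a_n)$ — and that the triangle-inequality splitting is legitimate with the operator norms $\|T_n' f_n\| \le \|T_n\| \le \sup_{T \in \mathcal{T}}\|T\| < \infty$ keeping everything bounded. Once the splitting is set up correctly the contradiction is immediate. I should also double-check that the Burkinshaw--Dodds theorem is being invoked in the correct direction in $ii) \Rightarrow iii)$: \text{\rm Lwc}-ness of $B$ gives norm-nullity of disjoint sequences in $\text{\rm sol}(B)$, which is exactly the hypothesis under which \cite[Thm.\,5.63]{AB2006} yields uniform convergence of disjoint sequences of functionals on $B$.
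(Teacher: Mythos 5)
Your proof is correct and follows essentially the same route as the paper: the trivial implication $i)\Rightarrow ii)$, the Burkinshaw--Dodds theorem for $ii)\Rightarrow iii)$, and a contradiction argument via \cite[Thm.\,5.63]{AB2006} together with Eberlein--\v{S}mulian for $iii)\Rightarrow i)$. The only difference is that your final splitting $|(T_n'f_n)(a_n)|\le|(T_n'f_n)(a_n-x)|+|(T_n'f_n)(x)|$ is unnecessary: after relabeling, $(a_n)$ is itself a weakly convergent sequence and $(f_n)$ is still disjoint in $B_{F'}$, so hypothesis $iii)$ applies directly to the diagonal pair $(f_n,a_n)$ and yields the contradiction at once, which is how the paper concludes.
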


\begin{proof} 
$i)\ \Longrightarrow\ ii)$\
It is trivial.

\medskip
$ii)\ \Longrightarrow\ iii)$\
Let $x_n\convw x\in X$ and $(f_n)$ be disjoint in $B_{F'}$.
By the assumption, each disjoint sequence in
$\text{\rm sol}({\cal T}(\{x_k\}_{k=1}^\infty))$ is norm-null, and hence is uniformly
null on $B_{F'}$. By \cite[Thm.\,5.63]{AB2006},
$f_n\rightrightarrows 0({\cal T}(\{x_k\}_{k=1}^\infty))$. 
In particular, $\sup\limits_{T\in{\cal T}}|f_n(Tx_n)|\to 0$.

\medskip
$iii)\ \Longrightarrow\ i)$\
Suppose in contrary ${\cal T}A$ is not \text{\rm Lwc} 
for some relatively weakly compact $A\subseteq X$.
By \cite[Thm.\,5.63]{AB2006}, there exists a disjoint sequence $(f_n)$ in $B_{F'}$
such that $f_n\not\rightrightarrows 0({\cal T}A)$. Thus, for some $\varepsilon>0$
there exist sequences $(x_n)$ in $A$ and $(T_n)$ in ${\cal T}$
satisfying $|f_n(T_nx_n)|\ge\varepsilon$ for all $n$.
By using the relative weak compactness of $A$ and passing to a subsequence,
we may assume $x_n\convw x$. A contradiction to the assumption $iii)$
completes the proof.
\end{proof}

\subsection{}
The last theorem of this section extends \cite[Thm.\,2.1]{LBM2021}.  
Theorem \ref{3-LW-sets} implies \cite[Thm.\,2.1]{LBM2021} via taking ${\cal T}=\{T\}$.

\begin{theorem}\label{3-LW-sets}
Let ${\cal T}$ be a bounded subset of $\text{\rm L}(E,F)$. The following are equivalent.
\begin{enumerate}[$i)$]
\item 
${\cal T}\in\text{\bf o-Lwc}(E,F)$.
\item 
The set ${\cal T}(\{x_k\}_{k=1}^\infty)$  
is \text{\rm Lwc} for every order bounded $(x_k)$ in $E$.
\item 
$\sup\limits_{T\in{\cal T}}|f_n(Tx_n)|\to 0$ for every disjoint $(f_n)$ in $B_{F'}$
and every order bounded $(x_n)$.
\end{enumerate}
\end{theorem}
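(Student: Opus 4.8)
The plan is to mirror the proof of Theorem \ref{1-LW-sets}, substituting ``order bounded'' for ``relatively weakly compact'' throughout; the three implications $i)\Rightarrow ii)\Rightarrow iii)\Rightarrow i)$ should go through with only cosmetic changes, the only genuine subtlety being how one extracts the needed control in $iii)\Rightarrow i)$ from an order bounded \emph{set} rather than a convergent sequence.

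\medskip
\noindent\textbf{$i)\Rightarrow ii)$.} This is trivial: if $(x_k)$ is order bounded then $A=\{x_k\}_{k=1}^\infty$ is an order bounded subset of $E$, so ${\cal T}A$ is \text{\rm Lwc} by definition of $\text{\bf o-Lwc}(E,F)$.

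\medskip
\noindent\textbf{$ii)\Rightarrow iii)$.} Given an order bounded sequence $(x_n)$ in $E$ and a disjoint sequence $(f_n)$ in $B_{F'}$, apply $ii)$ to the order bounded sequence $(x_k)$ itself: the set ${\cal T}(\{x_k\}_{k=1}^\infty)$ is \text{\rm Lwc}, so every disjoint sequence in its solid hull is norm-null, hence uniformly null on $B_{F'}$; by the Burkinshaw--Dodds theorem \cite[Thm.\,5.63]{AB2006} this yields $f_n\rightrightarrows 0({\cal T}(\{x_k\}_{k=1}^\infty))$, and in particular $\sup_{T\in{\cal T}}|f_n(Tx_n)|\to 0$.

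\medskip
\noindent\textbf{$iii)\Rightarrow i)$.} Argue by contradiction: suppose ${\cal T}A$ is not \text{\rm Lwc} for some order bounded $A\subseteq E$, say $A\subseteq[-u,u]$ for some $u\in E_+$. By \cite[Thm.\,5.63]{AB2006} there is a disjoint sequence $(f_n)$ in $B_{F'}$ with $f_n\not\rightrightarrows 0({\cal T}A)$, so for some $\varepsilon>0$ we find $(x_n)$ in $A$ and $(T_n)$ in ${\cal T}$ with $|f_n(T_nx_n)|\ge\varepsilon$ for all $n$. Since each $x_n\in[-u,u]$, the sequence $(x_n)$ is order bounded, and this directly contradicts $iii)$. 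Here is the one point where the order-bounded case is actually \emph{easier} than the weakly compact case: in Theorem \ref{1-LW-sets} one had to pass to a weakly convergent subsequence to meet the hypothesis, whereas an order bounded subset of $E$ stays order bounded when we pass to the sequence of selected points, so no subsequence extraction is needed.

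\medskip
I do not expect any serious obstacle: the equivalences are a faithful ``collectivization'' of \cite[Thm.\,2.1]{LBM2021}, and the Burkinshaw--Dodds characterization of \text{\rm Lwc} sets plus the elementary observation above carry the whole argument. If anything needs care, it is only checking that ${\cal T}$ being bounded in $\text{\rm L}(E,F)$ makes ${\cal T}(\{x_k\}_{k=1}^\infty)$ and ${\cal T}A$ bounded subsets of $F$, so that the hypothesis of \cite[Thm.\,5.63]{AB2006} applies — but this is immediate since $\sup_{T\in{\cal T}}\|T\|<\infty$ and $A$ (hence $\{x_k\}$) is norm bounded.
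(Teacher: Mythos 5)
Your proof is correct and follows essentially the same route as the paper's: trivial first implication, Burkinshaw--Dodds for $ii)\Rightarrow iii)$, and a contradiction argument for $iii)\Rightarrow i)$ in which, as you note, no subsequence extraction is needed since the selected points stay in the order interval (the paper works directly with ${\cal T}[-x,x]$, which is the same observation). No gaps.
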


\begin{proof} 
$i)\ \Longrightarrow\ ii)$\
It is trivial.

\medskip
$ii)\ \Longrightarrow\ iii)$\
Let $(x_k)$ be a sequence in $[-x,x]$ for some $x\in X$
and $(f_n)$ be disjoint in $B_{F'}$.
By the assumption, every disjoint sequence in
$\text{\rm sol}({\cal T}(\{x_k\}_{k=1}^\infty))$ is norm-null, and hence is uniformly
null on $B_{F'}$. By \cite[Thm.\,5.63]{AB2006},
$f_n\rightrightarrows 0({\cal T}(\{x_k\}_{k=1}^\infty))$. 
In particular, $\sup\limits_{T\in{\cal T}}|f_n(Tx_n)|\to 0$.

\medskip
$iii)\ \Longrightarrow\ i)$\
Suppose ${\cal T}[-x,x]$ is not \text{\rm Lwc} for some $x\in E_+$.
By \cite[Thm.\,5.63]{AB2006}, there exists disjoint $(f_n)$ in $B_{F'}$
such that $f_n\not\rightrightarrows 0({\cal T}[-x,x])$. Thus, for some $\varepsilon>0$
there exist a sequence $(x_n)$ in $[-x,x]$ and a sequence $(T_n)$ in ${\cal T}$
satisfying $|f_n(T_nx_n)|\ge\varepsilon$ for all $n$.
A contradiction to the assumption $iii)$ completes the proof.
\end{proof}

\section{Main results}

\hspace{5mm}
Recall that an operator $T:E\to Y$ is M-weakly compact if
$(Tx_n)$ is norm null for each disjoint sequence $(x_n)$ in $B_E$.
By the Meyer-Nieberg result \cite[Satz~3]{Mey1974}, 
\text{\rm Lwc} and \text{\rm Mwc} operators are in the duality
(cf. \cite[Prop.\,3.6.11]{Mey1991}, \cite[Thm.\,5.64]{AB2006}).
Almost (limitedly, order) L-weakly compact operators do not enjoy such a duality,
however, for them still there are useful semi-duality results proved in
\cite[Thm.\,2.5]{BLM2018}, \cite[Thm.\,4]{AEG2024TJM}, and \cite[Thm.\,2.3]{LBM2021}
respectively. For establishing the main result of the present paper,
we need the following notion of collectively qualified M-weakly compact
sets of operators.

\begin{definition}\label{col Mwc-sets of operators}
{\em
A bounded subset ${\cal T}$ of $\text{\rm L}(E,Y)$ is:
\begin{enumerate}[a)]
\item 
collectively M-weakly compact (${\cal T}\in\text{\bf Mwc}(E,Y)$) if 
$\sup\limits_{T\in{\cal T}}\|Tx_n\|\to 0$ for every disjoint $(x_n)$ in $B_E$.
\item 
collectively almost M-weakly compact (${\cal T}\in\text{\bf a-Mwc}(E,Y)$) if
$\sup\limits_{T\in{\cal T}}|f_n(Tx_n)|\to 0$ for every weakly convergent $(f_n)$ in $Y'$ and 
every disjoint $(x_n)$ in~$B_E$.
\item 
collectively limitedly M-weakly compact (${\cal T}\in\text{\bf l-Mwc}(E,Y)$) if
$\sup\limits_{T\in{\cal T}}|f(Tx_n)|\to 0$ for every $f\in Y'$ and 
every disjoint $(x_n)$ in~$B_E$.
\end{enumerate}
A bounded subset ${\cal T}$ of $\text{\rm L}(E,F)$ is:
\begin{enumerate}[d)]
\item 
collectively order M-weakly compact (${\cal T}\in\text{\bf o-Mwc}(E,F)$) if
$\sup\limits_{T\in{\cal T}}|f_n(Tx_n)|\to 0$ for every order bounded $(f_n)$ in $F'$ and 
every disjoint $(x_n)$ in~$B_E$.
\end{enumerate}}
\end{definition}

\subsection{The semi-duality theorem.}
Now, we are in the position to present our main result, 
Theorem \ref{CaloLW-MW-duality}. It is worth noting that \cite[Thm.\,2.5]{BLM2018}, 
\cite[Thm.\,4]{AEG2024TJM}, and \cite[Thm.\,2.3]{LBM2021}
follow from Theorem \ref{CaloLW-MW-duality} by taking ${\cal T}=\{T\}$.
It can be also seen by considering examples from \cite{AEG2024TJM,BLM2018,LBM2021,OM2022}
for the case ${\cal T}=\{T\}$ that implications in $(ii_1)$, $(ii_2)$, and $(ii_3)$ are not invertable.

\begin{theorem}\label{CaloLW-MW-duality}
{\em 
The following statements hold:
\begin{enumerate}
\item[$(i)$] 
${\cal T}\in\text{\bf Mwc}(E,Y)\Longleftrightarrow{\cal T}'\in\text{\bf Lwc}(Y',E')$.
\item[$(ii)$] 
${\cal T}\in\text{\bf Lwc}(X,F)\Longleftrightarrow{\cal T}'\in\text{\bf Mwc}(F',X')$.
\end{enumerate}
\begin{enumerate}
\item[$(i_1)$] 
${\cal T}\in\text{\bf a-Mwc}(E,Y)\Longleftrightarrow{\cal T}'\in\text{\bf a-Lwc}(Y',E')$.
\item[$(ii_1)$] 
${\cal T}\in\text{\bf a-Lwc}(X,F)\Longleftarrow{\cal T}'\in\text{\bf a-Mwc}(F',X')$.
\end{enumerate}
\begin{enumerate}
\item[$(i_2)$] 
${\cal T}\in\text{\bf l-Mwc}(E,Y)\Longleftrightarrow{\cal T}'\in\text{\bf l-Lwc}(Y',E')$.
\item[$(ii_2)$] 
${\cal T}\in\text{\bf l-Lwc}(X,F)\Longleftarrow{\cal T}'\in\text{\bf l-Mwc}(F',X')$.
\end{enumerate}
\begin{enumerate}
\item[$(i_3)$]  
${\cal T}\in\text{\bf o-Mwc}(E,F)\Longleftrightarrow{\cal T}'\in\text{\bf o-Lwc}(F',E')$.
\item[($ii_3$)]  
${\cal T}\in\text{\bf o-Lwc}(E,F)\Longleftarrow{\cal T}'\in\text{\bf o-Mwc}(F',E')$.
\end{enumerate}}
\end{theorem}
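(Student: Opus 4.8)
The plan is to translate each assertion into a statement about the uniform smallness, over $T\in{\cal T}$, of the scalars $g(Tx)=(T'g)(x)$, and then to pass between ``collective L-weak compactness of a set in a dual lattice $E'$'' and such scalar conditions by means of Lemma \ref{just lemma}, while ``collective L-weak compactness of a set inside $F$ itself'' is handled through \cite[Thm.\,5.63]{AB2006}. The eight assertions fall into pairs: each $(i_\ast)$ is a genuine equivalence obtained by a direct translation, and each $(ii_\ast)$ is the single implication that survives after composing with the canonical embeddings $J_X\colon X\to X''$ and $J_E\colon E\to E''$. Throughout one uses that boundedness of ${\cal T}$ gives boundedness of ${\cal T}'$, so Definition \ref{col Mwc-sets of operators} and Theorems \ref{2-LW-sets}, \ref{1-LW-sets}, \ref{3-LW-sets} apply to ${\cal T}'$ as well.

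For the ``$(i)$-type'' statements the scheme is uniform. For $(i_1)$, $(i_2)$, $(i_3)$ one first reduces membership of ${\cal T}'$ in $\text{\bf a-Lwc}(Y',E')$ (resp. $\text{\bf l-Lwc}(Y',E')$, $\text{\bf o-Lwc}(F',E')$) to the assertion that ${\cal T}'B$ is an $\text{\rm Lwc}$ subset of $E'$ for every $B$ in a suitable family of test sets: all relatively weakly compact $B\subseteq Y'$ for $(i_1)$; all singletons $B=\{g\}$, $g\in Y'$, for $(i_2)$, using $i)\Leftrightarrow ii)$ of Theorem \ref{2-LW-sets}; and all order intervals $B=[-h,h]$, $h\in F'_+$, for $(i_3)$, since every order bounded subset of $F'$ lies in such an interval. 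As ${\cal T}'B$ is a bounded subset of $E'$, Lemma \ref{just lemma} (equivalence $i)\Leftrightarrow ii)$) says it is $\text{\rm Lwc}$ exactly when every disjoint sequence $(x_n)$ in $B_E$ is uniformly null on it, i.e. $\sup_{T\in{\cal T},\,g\in B}|g(Tx_n)|\to0$; comparison with Definition \ref{col Mwc-sets of operators} identifies this with collective $\ast$-M-weak compactness of ${\cal T}$. One implication is immediate on specializing $B$; the converse follows by extracting, from data $(x_n),(T_n),(f_n)$ witnessing a failure of uniform nullity, a subsequence contradicting the M-weak hypothesis — and in case $(i_1)$ this extraction also invokes the Eberlein--\v{S}mulian theorem to obtain a weakly convergent subsequence of the $f_n\in B$, together with the remark that a weakly convergent sequence plus its limit is weakly compact. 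For $(i)$ the same computation gives $\sup_{T\in{\cal T}}\|Tx_n\|=\sup_{T\in{\cal T},\,g\in B_{Y'}}|g(Tx_n)|\to0$; for $(ii)$ one instead applies \cite[Thm.\,5.63]{AB2006} to ${\cal T}B_X\subseteq F$, which is $\text{\rm Lwc}$ iff $\sup_{T\in{\cal T}}\|T'g_n\|=\sup_{T\in{\cal T},\,x\in B_X}|g_n(Tx)|\to0$ for every disjoint $(g_n)$ in $B_{F'}$, i.e. iff ${\cal T}'\in\text{\bf Mwc}(F',X')$.

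For the ``$(ii_\ast)$-type'' statements, assume the relevant collective $\ast$-M-weak compactness of ${\cal T}'$. By criterion $iii)$ of Theorems \ref{1-LW-sets}, \ref{2-LW-sets}, \ref{3-LW-sets}, to conclude the corresponding property of ${\cal T}$ it suffices to prove $\sup_{T\in{\cal T}}|f_n(Tx_n)|\to0$ for every disjoint $(f_n)$ in $B_{F'}$ and every $(x_n)$ of the relevant type (weakly convergent in $X$; constant; order bounded in $E$). Writing $f_n(Tx_n)=(J_X x_n)(T'f_n)$ (resp. $(J_E x_n)(T'f_n)$), one checks that $(J_X x_n)$ is weakly convergent in $X''$ whenever $(x_n)$ is weakly convergent in $X$ — because $\Phi\circ J_X\in X'$ for every $\Phi\in X'''$, so $J_X$ is weak-to-weak continuous — that it is norm bounded (and constant when $(x_n)$ is), and that $(J_E x_n)$ lies in the order interval $[-J_E x,J_E x]$ of $E''$ when $(x_n)\subseteq[-x,x]$, since $J_E$ is positive. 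Feeding $(J_X x_n)$ (resp. $(J_E x_n)$) together with $(f_n)$ into the hypothesis on ${\cal T}'$ then yields exactly $\sup_{T\in{\cal T}}|f_n(Tx_n)|\to0$. The converse implications are unavailable precisely because the range of $J_X$ (resp. the order intervals of $J_E(E)$) does not exhaust $X''$ (resp. $E''$), so the hypothesis on ${\cal T}$ controls only the canonical images of the source sequences, not arbitrary sequences in the bidual.

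The arguments are short once these reformulations are set up; the only points demanding care are the two successive subsequence extractions in the forward direction of $(i_1)$, and keeping straight which structure each bidual inherits under the canonical embedding — weak convergence through $J_X$, order boundedness through $J_E$ — which is exactly what forces the $(ii_\ast)$ assertions to be one-directional.
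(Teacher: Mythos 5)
Your proposal is correct and follows essentially the same route as the paper: translate each collective property into uniform-smallness statements for $\sup_{T\in{\cal T}}|g(Tx)|=\sup_{T\in{\cal T}}|(T'g)(x)|$, use Lemma \ref{just lemma} together with the sequential criteria of Theorems \ref{1-LW-sets}, \ref{2-LW-sets}, \ref{3-LW-sets} for the equivalences $(i_\ast)$, and obtain the one-sided implications $(ii_\ast)$ by composing with the canonical embedding into the bidual, which preserves weak convergence and order boundedness but does not exhaust $X''$ (hence no converse). The only cosmetic difference is that you verify $(i)$ and $(ii)$ directly and run the $(i_1)$ extraction through Eberlein--\v{S}mulian on a relatively weakly compact test set, where the paper cites \cite[Thm.\,2]{E2024} and works with subsequences of a fixed weakly convergent sequence.
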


\begin{proof}
Items $(i)$ and $(ii)$ are proved in \cite[Thm.\,2]{E2024}

\bigskip
$(i_1)$\ 
$(\Longrightarrow)$:\
Let ${\cal T}\in\text{\bf a-Mwc}(E,Y)$ and $f_k\convw f$ in $Y'$. Then 
$\sup\limits_{T\in{\cal T}}|f_{k_n}(Tx_n)|\to 0$ for every 
subsequence $(f_{k_n})$ and every disjoint sequence $(x_n)$ in $B_E$.
Therefore, for each disjoint $(x_n)$ in $B_E$,
$x_n\rightrightarrows 0\big(\{T'f_k\}_{T\in{\cal T}, k\in\mathbb{N}}\big)$. 
By Lemma \ref{just lemma}, $\{T'f_k\}_{T\in{\cal T}, k\in\mathbb{N}}$ is 
an \text{\rm Lwc} subset of $E'$. Since $f_k\convw f$ is arbitrary 
then ${\cal T}'\in\text{\bf a-Lwc}(Y',E')$ by Theorem \ref{1-LW-sets}.

\medskip
$(\Longleftarrow)$:\
Let ${\cal T}'\in\text{\bf a-Lwc}(Y',E')$ and $f_k\convw f$ in $Y'$.
Then $\{T'f_k\}_{T\in{\cal T}, k\in\mathbb{N}}$ is an \text{\rm Lwc} subset of $E'$
by Theorem \ref{1-LW-sets}. Thus, $x_n\rightrightarrows 0(\{T'f_k\}_{T\in{\cal T}, k\in\mathbb{N}})$
by Lemma \ref{just lemma} for each disjoint $(x_n)$ in $B_E$.
In particular, $\sup\limits_{T\in{\cal T}}|f_n(Tx_n)|\to 0$
for each disjoint $(x_n)$ in $B_E$.
Since $f_k\convw f$ is arbitrary then ${\cal T}\in\text{\bf a-Mwc}(E,Y)$.

\medskip
$(ii_1)$\  
Let ${\cal T}'\in\text{\bf a-Mwc}(F',X')$ and $x_n\convw x$ in $X$. 
Then ${\hat x_n}\convw{\hat x}$ in $X''$, and hence
$$
   \sup\limits_{T\in{\cal T}}|f_n(Tx_n)|=\sup\limits_{T\in{\cal T}}|(T'f_n)x_n|=
   \sup\limits_{T\in{\cal T}}|{\hat x_n}(T'f_n)|\to 0
$$
for every disjoint $(f_n)$ in~$B_{E'}$. Since $x_n\convw x$ is arbitrary,
${\cal T}\in\text{\bf a-Lwc}(X,F)$ by Theorem \ref{1-LW-sets}.

\bigskip
$(i_2)$\ 
$(\Longrightarrow)$:\
Let ${\cal T}\in\text{\bf l-Mwc}(E,Y)$ and $f\in Y'$. Then $\sup\limits_{T\in{\cal T}}|f(Tx_n)|\to 0$, 
and hence $x_n\rightrightarrows 0(\{T'f: T\in{\cal T}\})$ for each disjoint $(x_n)$ in $B_E$.
By Lemma \ref{just lemma}, $\{T'f: T\in{\cal T}\}$ is an \text{\rm Lwc} subset of $E'$.
Since $f\in Y'$ is arbitrary, Theorem \ref{2-LW-sets} implies ${\cal T}'\in\text{\bf l-Lwc}(Y',E')$.

\medskip
$(\Longleftarrow)$:\
Let ${\cal T}'\in\text{\bf l-Lwc}(Y',E')$ and $f\in Y'$. Then $\{T'f: T\in{\cal T}\}$ is an \text{\rm Lwc} 
subset of $E'$. By Lemma \ref{just lemma}, $x_n\rightrightarrows 0(\{T'f: T\in{\cal T}\})$,
or equivalently, $\sup\limits_{T\in{\cal T}}|f(Tx_n)|\to 0$ for each disjoint $(x_n)$ in $B_E$.
Since $f\in Y'$ is arbitrary, ${\cal T}\in\text{\bf l-Mwc}(E,Y)$.

\medskip
$(ii_2)$\  
Let ${\cal T}'\in\text{\bf l-Mwc}(F',X')$ and $(f_n)$ be disjoint in $B_{F'}$.
Then $\sup\limits_{T\in{\cal T}}|g(T'f_n)|\to 0$ for each $g\in X''$, and hence 
$\sup\limits_{T\in{\cal T}}|(T'f_n)x|\to 0$ for each $x\in X$.
By Theorem \ref{2-LW-sets}, ${\cal T}\in\text{\bf l-Lwc}(X,F)$.

\bigskip
$(i_3)$\ 
$(\Longrightarrow)$:\
Let ${\cal T}\in\text{\bf o-Mwc}(E,F)$ and $(f_k)$ be order bounded in $F'$. 
Then $\sup\limits_{T\in{\cal T}}|f_{k_n}(Tx_n)|\to 0$ for every subsequence 
$(f_{k_n})$ of $(f_k)$ and every disjoint $(x_n)$ in $B_E$. Therefore, 
$x_n\rightrightarrows 0(\{T'f_k: T\in{\cal T}, k\in\mathbb{N}\})$ for each 
disjoint $(x_n)$ in $B_E$, and hence $\{T'f_k: T\in{\cal T}, k\in\mathbb{N}\}$ is 
an \text{\rm Lwc} subset of $E'$ by Lemma \ref{just lemma}. Since the order bounded 
sequence $(f_k)$ is arbitrary, ${\cal T}'\in\text{\bf o-Lwc}(F',E')$ by Theorem \ref{3-LW-sets}.

\medskip
$(\Longleftarrow)$:\
Let ${\cal T}'\in\text{\bf o-Lwc}(F',E')$ and $(f_k)$ be order bounded in $F'$.
Then, the set $\{T'f_k\}_{T\in{\cal T}, k\in\mathbb{N}}$ is \text{\rm Lwc} in $E'$
by Theorem \ref{3-LW-sets}. Therefore, for each disjoint $(x_n)$ in $B_E$,
$x_n\rightrightarrows 0(\{T'f_k\}_{T\in{\cal T}, k\in\mathbb{N}})$ by Lemma \ref{just lemma}. 
In particular, $\sup\limits_{T\in{\cal T}}|f_n(Tx_n)|\to 0$ for each disjoint $(x_n)$ in $B_E$. 
As the order bounded $(f_k)$ is arbitrary, ${\cal T}\in\text{\bf o-Mwc}(E,F)$.

\medskip
$(ii_3)$\  
Let ${\cal T}'\in\text{\bf o-Mwc}(F',E')$ and $(x_k)$ be order bounded in $X$. 
Then $({\hat x_k})$ is order bounded in $X''$, and hence
$
  \sup\limits_{T\in{\cal T}}|f_n(Tx_n)|=\sup\limits_{T\in{\cal T}}|(T'f_n)x_n|=
  \sup\limits_{T\in{\cal T}}|{\hat x_n}(T'f_n)|\to 0
$
for every disjoint $(f_n)$ in~$B_{E'}$. Since the order bounded sequence $(x_n)$ is arbitrary
then ${\cal T}\in\text{\bf o-Lwc}(E,F)$ by Theorem \ref{3-LW-sets}.
\end{proof}

\subsection{}
Using Theorem \ref{CaloLW-MW-duality}, we obtain the following two propositions.

\begin{proposition}\label{closure-MW-sets}
The following holds.
\begin{enumerate}[$i)$]
\item 
${\cal T}\in\text{\bf Mwc}(E,Y)\Longrightarrow\overline{{\cal T}}^{\|\cdot\|}\in\text{\bf Mwc}(E,Y)$.
\item 
${\cal T}\in\text{\bf a-Mwc}(E,Y)\Longrightarrow\overline{{\cal T}}^{\|\cdot\|}\in\text{\bf a-Mwc}(E,Y)$.
\item 
${\cal T}\in\text{\bf l-Mwc}(E,Y)\Longrightarrow\overline{{\cal T}}^{\|\cdot\|}\in\text{\bf l-Mwc}(E,Y)$.
\item 
${\cal T}\in\text{\bf o-Mwc}(E,F)\Longrightarrow\overline{{\cal T}}^{\|\cdot\|}\in\text{\bf o-Mwc}(E,F)$.
\end{enumerate}
\end{proposition}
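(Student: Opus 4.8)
The plan is to leverage the semi-duality Theorem \ref{CaloLW-MW-duality} together with the already established Proposition \ref{closure-LW-sets} (and \cite[Prop.\,1]{E2024} for the $\text{\bf Mwc}$ case), thereby reducing each norm-closure statement for M-weakly compact sets to the corresponding strong-closure statement for L-weakly compact sets of adjoint operators. The key observation making this work is that the passage to adjoints, $\mathcal{T}\mapsto\mathcal{T}'$, is an isometric embedding of $\text{\rm L}(E,Y)$ into $\text{\rm L}(Y',E')$, so that $\overline{\mathcal{T}}^{\|\cdot\|}{}'$ is contained in (indeed equals, on the relevant factor) the strong closure $\overline{\mathcal{T}'}^{\,s}$ of $\mathcal{T}'$; more precisely, if $S_n\to S$ in operator norm then $S_n'\to S'$ in operator norm, hence strongly, so $\overline{\mathcal{T}}^{\|\cdot\|}{}' \subseteq \overline{\mathcal{T}'}^{\,s}$.

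For concreteness, consider item $ii)$. First I would take $\mathcal{T}\in\text{\bf a-Mwc}(E,Y)$; by the $(\Longrightarrow)$ direction of $(i_1)$ in Theorem \ref{CaloLW-MW-duality}, $\mathcal{T}'\in\text{\bf a-Lwc}(Y',E')$. Then Proposition \ref{closure-LW-sets}$(i)$ gives $\overline{\mathcal{T}'}^{\,s}\in\text{\bf a-Lwc}(Y',E')$. Now $\overline{\mathcal{T}}^{\|\cdot\|}{}'\subseteq\overline{\mathcal{T}'}^{\,s}$, and it is routine that a subset of a collectively almost L-weakly compact set is again collectively almost L-weakly compact (since $\mathcal{S}A\subseteq\mathcal{S}_1 A$ whenever $\mathcal{S}\subseteq\mathcal{S}_1$, and subsets of \text{\rm Lwc} sets are \text{\rm Lwc}), so $\overline{\mathcal{T}}^{\|\cdot\|}{}'\in\text{\bf a-Lwc}(Y',E')$. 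Finally, applying the $(\Longleftarrow)$ direction of $(i_1)$ to the set $\overline{\mathcal{T}}^{\|\cdot\|}$ yields $\overline{\mathcal{T}}^{\|\cdot\|}\in\text{\bf a-Mwc}(E,Y)$. Items $iii)$ and $iv)$ are handled identically using $(i_2)$ and $(i_3)$ in place of $(i_1)$, and Proposition \ref{closure-LW-sets}$(ii)$, $(iii)$ in place of part $(i)$; item $i)$ uses the duality $(i)$ of Theorem \ref{CaloLW-MW-duality} together with \cite[Prop.\,1]{E2024} (strong closure of a collectively \text{\rm Lwc} set is collectively \text{\rm Lwc}).

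Alternatively — and perhaps cleaner to write, avoiding any appeal to adjoints — one can argue directly from the $\sup$-characterizations in Definition \ref{col Mwc-sets of operators}. For instance in case $ii)$: fix a weakly convergent sequence $(f_n)$ in $Y'$ and a disjoint sequence $(x_n)$ in $B_E$; given $S\in\overline{\mathcal{T}}^{\|\cdot\|}$ and $\varepsilon>0$, choose $T\in\mathcal{T}$ with $\|S-T\|\le\varepsilon$, so that $|f_n(Sx_n)|\le|f_n(Tx_n)|+\varepsilon\sup_m\|f_m\|$, and take $\sup$ over $S$ and then $\limsup$ over $n$; since $\varepsilon$ is arbitrary and $\sup_{T\in\mathcal{T}}|f_n(Tx_n)|\to0$ by hypothesis, one concludes $\sup_{S\in\overline{\mathcal{T}}^{\|\cdot\|}}|f_n(Sx_n)|\to0$. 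The boundedness of $(\|f_n\|)$ needed here is automatic since weakly convergent sequences are norm bounded; the analogous estimates for $i)$, $iii)$, $iv)$ are the same, using $\|x_n\|\le 1$ throughout.

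I do not expect a serious obstacle: the content is entirely formal, the only points requiring a word of care being (a) that norm convergence of operators passes to norm convergence of adjoints (if one takes the duality route), or (b) the uniform boundedness of the test sequences $(f_n)$ in the direct route — both standard. The main drafting decision is simply which of the two routes to present; I would present the direct $\varepsilon$-argument for one case in full and remark that the remaining three are "proved analogously," exactly in the style already used for Propositions \ref{closure-LW-sets} and \ref{totally bounded}.
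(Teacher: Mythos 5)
Your first route is exactly the paper's proof: pass to adjoints via Theorem \ref{CaloLW-MW-duality}, apply Proposition \ref{closure-LW-sets} (resp. \cite[Prop.\,1]{E2024}) to $\overline{{\cal T}'}^{\,s}\supseteq(\overline{{\cal T}}^{\|\cdot\|})'$, and dualize back. The alternative direct $\varepsilon$-estimate you sketch is also correct, but the paper presents only the duality argument.
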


\begin{proof} 
As all four cases are analogues, we include only the proof of $ii)$. 
So, let ${\cal T}\in\text{\bf a-Mwc}(E,Y)$.
Theorem \ref{CaloLW-MW-duality} implies ${\cal T}'\in\text{\bf a-Lwc}(Y',E')$.
By Proposition \ref{closure-LW-sets},
$\overline{{\cal T}'}^s\in\text{\bf a-Lwc}(Y',E')$, and hence
$\Big(\overline{{\cal T}}^{\|\cdot\|}\Big)'=\overline{{\cal T}'}^{\|\cdot\|}\in\text{\bf a-Lwc}(Y',E')$.
The second application of Theorem \ref{CaloLW-MW-duality}
gives $\overline{{\cal T}}^{\|\cdot\|}\in\text{\bf a-Mwc}(E,Y)$ as desired.
\end{proof} 

\begin{proposition}\label{Mwc totally bounded}
{\em
Every totally bounded set of almost (limitedly, order) M-weakly compact operators
is collectively almost (limitedly, order) M-weakly compact.}
\end{proposition}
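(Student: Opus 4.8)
The plan is to mimic the strategy already used twice in the paper — once in Proposition~\ref{totally bounded} for the Lwc side, and once implicitly in Proposition~\ref{closure-MW-sets} for the Mwc side — and to reduce the statement to the corresponding fact about collectively qualified L-weakly compact sets via the semi-duality Theorem~\ref{CaloLW-MW-duality}. Concretely, fix one of the three cases (say \textbf{a-Mwc}, the others being verbatim analogues) and let ${\cal T}$ be a totally bounded subset of $\text{\rm a-Mwc}(E,Y)$. The first step is to pass to adjoints: the map $T\mapsto T'$ is an isometry from $\text{\rm L}(E,Y)$ into $\text{\rm L}(Y',E')$, so ${\cal T}'=\{T':T\in{\cal T}\}$ is again totally bounded, and by the scalar semi-duality \cite[Thm.\,4]{AEG2024TJM} (the case ${\cal T}=\{T\}$ of Theorem~\ref{CaloLW-MW-duality}, item $(i_1)$) each $T'$ with $T\in{\cal T}$ lies in $\text{\rm a-Lwc}(Y',E')$. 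Thus ${\cal T}'$ is a totally bounded set of almost L-weakly compact operators.

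The second step is to invoke Proposition~\ref{totally bounded}: a totally bounded set of almost L-weakly compact operators is collectively almost L-weakly compact, so ${\cal T}'\in\text{\bf a-Lwc}(Y',E')$. The third and final step is to apply Theorem~\ref{CaloLW-MW-duality}$(i_1)$ in the reverse direction, ${\cal T}'\in\text{\bf a-Lwc}(Y',E')\Longrightarrow{\cal T}\in\text{\bf a-Mwc}(E,Y)$, which is exactly what we want. The limited case uses item $(i_2)$ of Theorem~\ref{CaloLW-MW-duality} together with the limited analogue of Proposition~\ref{totally bounded}, and the order case uses item $(i_3)$ with the order analogue (both analogues being asserted in Proposition~\ref{totally bounded}); note that for the order case one must restrict to ${\cal T}\subseteq\text{\rm L}(E,F)$ with $F$ a Banach lattice, which is exactly the hypothesis under which $\text{\bf o-Mwc}$ is defined, so no difficulty arises there.

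Alternatively, one could give a direct $\varepsilon$-net argument that does not route through duality, paralleling the proof of Proposition~\ref{totally bounded}: given a disjoint $(x_n)$ in $B_E$ and a weakly convergent $(f_n)$ in $Y'$ (bounded, say by $M$), and given a finite $\tfrac{\varepsilon}{2M}$-net ${\cal T}_0\subseteq{\cal T}$, estimate $\sup_{T\in{\cal T}}|f_n(Tx_n)|\le \max_{S\in{\cal T}_0}|f_n(Sx_n)| + M\cdot\sup_{T\in{\cal T}}\|T-S_T\|\cdot\|x_n\|$ where $S_T\in{\cal T}_0$ is chosen with $\|T-S_T\|\le\tfrac{\varepsilon}{2M}$; the first term tends to $0$ because it is a finite maximum of sequences each tending to $0$ (by $S\in\text{\rm a-Mwc}(E,Y)$), and the second is $\le\varepsilon/2$ uniformly in $n$. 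The duality route is cleaner, so I would present that as the main proof and perhaps remark on the direct argument.

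The only mild obstacle is bookkeeping across the four parallel cases and making sure the right variant of Theorem~\ref{CaloLW-MW-duality} and of Proposition~\ref{totally bounded} is cited in each; there is no genuine mathematical difficulty, since every ingredient has already been established. Following the paper's own convention (\emph{``as all cases are analogues, we include only the proof of \dots''}), I would write out only the \textbf{a-Mwc} case in full and indicate that the limited and order cases are obtained by the same three-step reduction with the obvious substitutions.
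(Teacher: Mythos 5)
Your proposal is correct and follows essentially the same route as the paper: pass to the (totally bounded) adjoint family ${\cal T}'$, note ${\cal T}'\subseteq\text{\rm a-Lwc}(Y',E')$ by the semi-duality, apply Proposition~\ref{totally bounded} to get ${\cal T}'\in\text{\bf a-Lwc}(Y',E')$, and apply Theorem~\ref{CaloLW-MW-duality} again to conclude ${\cal T}\in\text{\bf a-Mwc}(E,Y)$. The direct $\varepsilon$-net argument you sketch is also sound, but the paper presents only the duality reduction.
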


\begin{proof}
We include only the proof for \text{\rm a-Mwc} operators.
Let ${\cal T}$ be totally bounded subset of $\text{\rm a-Mwc}(E,Y)$.  
Then ${\cal T}'\subseteq\text{\rm a-Lwc}(Y',E')$ by Theorem \ref{CaloLW-MW-duality}.
Since ${\cal T}'$ is totally bounded, Proposition \ref{totally bounded}
implies  ${\cal T}'\in\text{\bf a-Lwc}(Y',E')$. The second application 
of Theorem \ref{CaloLW-MW-duality} gives ${\cal T}\in\text{\bf a-Mwc}(E,Y)$ as desired.
\end{proof}

\subsection{The collective domination problem.}
We need the following notion of domination \cite[Def.\,2]{E2024}.

\begin{definition}\label{collective domination}
{\em
Let ${\cal S}$ and ${\cal T}$ be two subsets of $\text{\rm L}_+(E,F)$.
We say that ${\cal S}$ is dominated by ${\cal T}$ if, for each $S\in{\cal S}$
there exists $T\in{\cal T}$ such that $S\le T$.}
\end{definition}

\noindent
It is straightforward that ${\cal S}\subset\text{\rm L}_+(E,F)$
is bounded whenever ${\cal S}$ is dominated by a bounded ${\cal T}\subset\text{\rm L}_+(E,F)$.
Our last result gives a version of \cite[Thm.\,6]{E2024} for 
collectively almost (limitedly, order) L-weakly compact sets of operators.

\begin{theorem}\label{Cal-aloLW-MW-domin}
{\em
Let ${\cal S}\subset\text{\rm L}_+(E,F)$ be dominated by ${\cal T}\subset\text{\rm L}_+(E,F)$. The following hold.
\begin{enumerate}
\item[$(i_1)$]  
If $E$ is a KB-space, then 
${\cal T}\in\text{\bf a-Lwc}(E,F)\Longrightarrow{\cal S}\in\text{\bf a-Lwc}(E,F)$.
\item[$(i_2)$]  
${\cal T}\in\text{\bf l-Lwc}(E,F)\Longrightarrow{\cal S}\in\text{\bf l-Lwc}(E,F)$.
\item[$(ii_2)$] 
${\cal T}\in\text{\bf l-Mwc}(E,F)\Longrightarrow{\cal S}\in\text{\bf l-Mwc}(E,F)$.
\item[$(i_3)$]  
${\cal T}\in\text{\bf o-Lwc}(E,F)\Longrightarrow{\cal S}\in\text{\bf o-Lwc}(E,F)$.
\item[$(ii_3)$] 
${\cal T}\in\text{\bf o-Mwc}(E,F)\Longrightarrow{\cal S}\in\text{\bf o-Mwc}(E,F)$.
\end{enumerate}}
\end{theorem}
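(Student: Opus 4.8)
The plan is to treat the five items by reducing each to a statement about sequences realizing the relevant compactness criteria, and then to exploit the domination $\mathcal{S}\le\mathcal{T}$ pointwise together with the sandwich structure $0\le Sx\le Tx$ for $x\in E_+$. In every case one starts with the sequential characterization: for the \textbf{a-Lwc}, \textbf{l-Lwc}, \textbf{o-Lwc} conclusions I would invoke Theorems~\ref{1-LW-sets}, \ref{2-LW-sets}, \ref{3-LW-sets} respectively, so that ${\cal S}\in\text{\bf a-Lwc}(E,F)$ (say) becomes the assertion $\sup_{S\in{\cal S}}|f_n(Sx_n)|\to 0$ for every disjoint $(f_n)$ in $B_{F'}$ and every weakly convergent $(x_n)$; for the \textbf{l-Mwc} and \textbf{o-Mwc} conclusions the criterion is already built into Definition~\ref{col Mwc-sets of operators}. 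The common device is then: given such a witnessing pair, choose for each index $n$ an operator $S_n\in{\cal S}$ (from the failure of the sup-condition) and $T_n\in{\cal T}$ with $S_n\le T_n$; split $x_n=x_n^+-x_n^-$ and $f_n=f_n^+-f_n^-$, and estimate $|f_n(S_nx_n)|$ using $|f_n(S_nx_n^{\pm})|\le f_n^+(S_nx_n^{\pm})+f_n^-(S_nx_n^{\pm})\le f_n^+(T_nx_n^{\pm})+f_n^-(T_nx_n^{\pm})$, reducing everything to terms of the form $g(T_nz_n)$ with $g\ge0$, $z_n\ge0$ drawn from the original sequences. The point is that $(|f_n|)$ being built from a disjoint sequence behaves well, and $(x_n^{\pm})$ inherits the relevant boundedness/order-boundedness from $(x_n)$.

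For the easy items $(i_2)$, $(ii_2)$, $(i_3)$, $(ii_3)$ this reduction should close quickly, since limitedness and order-boundedness of $A$ are preserved under passing to $\pm$ parts and under the solid hull (an order-bounded set stays order bounded; the unit ball is solid), so the four lattice-operation terms above are of exactly the same type as the hypothesis on ${\cal T}$ guarantees, and one concludes by applying the \textbf{P}-property of ${\cal T}$ to each of the four terms and adding. Here I would follow the single-operator arguments of \cite[Thm.\,6]{E2024} essentially verbatim, checking only that the ``collective'' suprema commute with the finite sums of estimates — they do, because $\sup_n(a_n+b_n)\le\sup_n a_n+\sup_n b_n$ and each summand is controlled uniformly in the operator.

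The genuinely delicate item is $(i_1)$, the almost-Lwc domination, which needs the KB-space hypothesis. The subtlety is that in Theorem~\ref{1-LW-sets}(iii) the sequence $(x_n)$ is only \emph{weakly} convergent, and weak convergence is not preserved by taking positive parts; so the naive splitting $x_n=x_n^+-x_n^-$ destroys the hypothesis. The standard fix, which I would import from the proof of \cite[Thm.\,6]{E2024}, is: in a KB-space a weakly convergent sequence lies in a weakly compact solid (equivalently, by the KB-property, the relevant order intervals are weakly compact), and one uses that a relatively weakly compact subset of a KB-space has a relatively weakly compact solid hull — so $\operatorname{sol}(\{x_k\})$ is relatively weakly compact, whence $(x_n^{\pm})$ is again relatively weakly compact and, after passing to a subsequence, weakly convergent. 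With that, the four-term estimate above goes through with each term $g(T_n z_n)$ handled by ${\cal T}\in\text{\bf a-Lwc}$ applied along the (weakly convergent sub-)sequences $(x_n^{\pm})$. I expect the main obstacle to be precisely the bookkeeping of this subsequence extraction: one has four sequences $(x_n^+)$, $(x_n^-)$ built from $(x_n)$ and must extract a common subsequence along which all four converge weakly while the contradiction inequality $|f_n(S_nx_n)|\ge\varepsilon$ survives; this is routine but must be stated carefully, and it is the only place the KB-space is actually used. I would end by noting that $(ii_1)$ is absent from the list because the dual of a KB-space need not behave well, so no analogous M-weakly-compact domination under domination in the ``a-'' case is claimed.
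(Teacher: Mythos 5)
Your proposal is correct and takes essentially the same route as the paper: the domination inequality $|f(Sx)|\le|f|(S|x|)\le|f|(T_S|x|)$ reduces each item to ${\cal T}$ acting on the solid hull, and $(i_1)$ rests in both arguments on the Abramovich--Wickstead theorem that in a KB-space the solid hull of a relatively weakly compact set is relatively weakly compact (the paper just phrases this at the level of sets, via ${\cal S}A\subseteq\text{\rm sol}\bigl(\bigcup_{T\in{\cal T}}T(\text{\rm sol}(A))\bigr)$ and the solidity of the class of Lwc sets, which avoids the subsequence bookkeeping you were worried about). One minor caution: your parenthetical claim that limitedness is preserved under solid hulls is false in general, but it is also not needed, since Theorem~\ref{2-LW-sets} reduces the l-Lwc case to singletons $\{x\}$, whose solid hull is the order interval $[-|x|,|x|]$.
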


\begin{proof}
$(i_1)$\ \ 
Let ${\cal T}\in\text{\bf a-Lwc}(E,F)$ and $A\subseteq E$ be weakly compact.
Since ${\cal S}$ is dominated by ${\cal T}$, then ${\cal S}$ is bounded and
${\cal S}A\subseteq\bigcup\limits_{T\in{\cal T}}T(\text{\rm sol}(A))$.
Indeed, let $x\in A$ and ${\cal S}\ni S\le T_S\in{\cal T}$. 
Since $Sx\le S|x|\le T_S|x|$ then $Sx\in[-T_S|x|,T_S|x|]\subseteq 
T_S(\text{\rm sol}(A))\subseteq\bigcup\limits_{T\in{\cal T}}T(\text{\rm sol}(A))$.
As $E$ is a KB-space, $\text{\rm sol}(A)$ is relatively weakly compact by the
Abramovich--Wickstead theorem (cf., \cite[Thm.\,4.39]{AB2006}).
Since ${\cal T}\in\text{\bf a-Lwc}(E,F)$ then 
$\bigcup\limits_{T\in{\cal T}}T(\text{\rm sol}(A))$ is an \text{\rm Lwc} set,
and hence ${\cal S}A$ is an \text{\rm Lwc} set. Therefore, ${\cal S}\in\text{\bf a-Lwc}(E,F)$.

\medskip
$(i_2)$\ \ 
Let ${\cal T}\in\text{\bf l-Lwc}(E,F)$ and $x\in E$.
Then $\{T|x|: T\in{\cal T}\}$, and hence $\{[-T|x|,T|x|]: T\in{\cal T}\}$ is an \text{\rm Lwc} set.
Since ${\cal S}$ is dominated by ${\cal T}$,
$$
   \{Sx: S\in{\cal S}\}\subseteq\{[-T_S|x|,T_S|x|]: S\in{\cal S}\}\subseteq 
   \{[-T|x|,T|x|]: T\in{\cal T}\}.
   \eqno(3)
$$
Since $x\in E$ is arbitrary, (3) implies ${\cal S}\in\text{\bf l-Lwc}(E,F)$ by Theorem \ref{2-LW-sets}.

$(ii_2)$\ \ 
Let ${\cal T}\in\text{\bf l-Mwc}(E,F)$, $f\in Y'$ and $(x_n)$ be disjoint in $B_E$.
It follows $\sup\limits_{T\in{\cal T}}|f|(T|x_n|)\to 0$, and hence
$\sup\limits_{S\in{\cal T}}|f(Sx_n)|\to 0$ because ${\cal S}$ is dominated by ${\cal T}$.
Therefore, ${\cal S}\in\text{\bf l-Mwc}(E,F)$.

\medskip
$(i_3)$\ \ 
Let ${\cal T}\in\text{\bf o-Lwc}(E,F)$ and $x\in E_+$.
Since ${\cal S}$ is dominated by ${\cal T}$ then
${\cal S}[-x,x]\subseteq\bigcup\limits_{T\in{\cal T}}T[-x,x]$.
Since ${\cal T}\in\text{\bf o-Lwc}(E,F)$ then 
$\bigcup\limits_{T\in{\cal T}}T[-x,x]$ is an \text{\rm Lwc} set,
and hence ${\cal S}[-x,x]$ is likewise \text{\rm Lwc}. 
Therefore, ${\cal S}\in\text{\bf o-Lwc}(E,F)$.

$(ii_3)$\ \ 
Let ${\cal T}\in\text{\bf o-Mwc}(E,F)$,
$(f_n)$ be order bounded in $F'$, and 
$(x_n)$ be disjoint in $B_E$.
Then $(|f_n|)$ is order bounded in $F'$ and 
$(|x_n|)$ is disjoint in $B_E$, and hence 
$\sup\limits_{T\in{\cal T}}||f_n|(T|x_n|)|\to 0$
because ${\cal T}\in\text{\bf o-Mwc}(E,F)$.
Since ${\cal S}$ is dominated by ${\cal T}$,
for every $S\in{\cal S}$ there exists $T_S\in{\cal T}$
with $|f_n(Sx_n)|\le|f_n|(S|x_n|)\le|f_n|(T_S|x_n|)$.
So, $\sup\limits_{S\in{\cal S}}|f_n(Sx_n)|\to 0$, and hence
${\cal S}\in\text{\bf o-Mwc}(E,F)$.
\end{proof}


\addcontentsline{toc}{section}{KAYNAKLAR}

\bibliographystyle{plain}
\end{document}